\newtheorem{theorem}{Theorem}
\newtheorem{assumption}[theorem]{Assumption}
\newtheorem{definition}[theorem]{Definition}
\newtheorem{lemma}[theorem]{Lemma}
\newtheorem{remark}[theorem]{Remark}
\newenvironment{proof}[1][Proof]{\textbf{#1.} }{\ \hspace*{\fill} \rule{0.5em}{0.5em}}
\newcommand{\bi}{\begin{itemize}}
\newcommand{\ei}{\end{itemize}}
\newcommand{\bd}{\begin{displaymath}}
\newcommand{\ed}{\end{displaymath}}
\newcommand{\be}{\begin{eqnarray*}}
\newcommand{\ee}{\end{eqnarray*}}
\begin{document}
%
% paper title
% Titles are generally capitalized except for words such as a, an, and, as,
% at, but, by, for, in, nor, of, on, or, the, to and up, which are usually
% not capitalized unless they are the first or last word of the title.
% Linebreaks \\ can be used within to get better formatting as desired.
% Do not put math or special symbols in the title.
%\title{Bare Demo of IEEEtran.cls for Journals}
\title{Fragility of Decentralized Load-Side Frequency Control in Stochastic Environment}

%Stochastic Stability Analysis and Synthesis of Continuous-Time Linear Networked Systems}
%
%
% author names and IEEE memberships
% note positions of commas and nonbreaking spaces ( ~ ) LaTeX will not break
% a structure at a ~ so this keeps an author's name from being broken across
% two lines.
% use \thanks{} to gain access to the first footnote area
% a separate \thanks must be used for each paragraph as LaTeX2e's \thanks
% was not built to handle multiple paragraphs
%

%\author{Michael~Shell,~\IEEEmembership{Member,~IEEE,}
%        John~Doe,~\IEEEmembership{Fellow,~OSA,}
%        and~Jane~Doe,~\IEEEmembership{Life~Fellow,~IEEE}% <-this % stops a space
%\thanks{M. Shell is with the Department
%of Electrical and Computer Engineering, Georgia Institute of Technology, Atlanta,
%GA, 30332 USA e-mail: (see http://www.michaelshell.org/contact.html).}% <-this % stops a space
%\thanks{J. Doe and J. Doe are with Anonymous University.}% <-this % stops a space
%\thanks{Manuscript received April 19, 2005; revised September 17, 2014.}}

\author{ Sai Pushpak and Umesh Vaidya% <-this % stops a space
\thanks{Financial support from the National Science Foundation grant CNS-1329915 and ECCS-1150405 is gratefully acknowledged. U. Vaidya is with the Department of Electrical \& Computer Engineering,
Iowa State University, Ames, IA 50011.}}
\maketitle
\begin{abstract}
In this paper, we demonstrate the fragility of decentralized load-side frequency algorithms proposed in \cite{Low_Zhao_control} against stochastic parametric uncertainty in power network model. The stochastic parametric uncertainty is motivated through the presence of renewable energy resources in power system model. 
%renewable energy resources in the power network motivated the stochastic parametric uncertainty in the power network model. 
We show that relatively small variance value of the parametric uncertainty affecting the system bus voltages cause the decentralized load-side frequency regulation algorithm to become stochastically unstable.  The critical variance value of the stochastic bus voltages above which the decentralized control algorithm become mean square unstable is computed using an analytical framework developed in \cite{Sai_arxiv,pushpak2015stability}. Furthermore, the critical variance value is shown to decrease with the increase in the cost of the controllable loads and with the increase in penetration of renewable energy resources. 
%fragility of the control algorithm worsens with the increase in cost of the controllable loads and with the increase . 
% Optimal control law robust to stochastic parametric uncertainty in power network model is also proposed. 
Finally, simulation results on IEEE 68 bus system are presented to verify the main findings of the paper. 
\end{abstract}

\section{Introduction}

In this paper, we show that the decentralized load-side frequency control algorithm proposed in \cite{Low_Zhao_control} is very fragile to parametric uncertainty in power network model. One of the essential components of the smart grid vision is the active participation of loads for improved operation and performance of network power system at different time scales \cite{doe_demandresponse}. The technology is maturing to the point where the smart grid vision can be realized for actively controlling the loads to absorb not only the long-term variability or uncertainty from renewable power generation but also short term fluctuations. The application of load-side control for frequency regulation falls into the latter category. With the potential benefits of active load control, there are increased research efforts towards the development of systematic analytical methods and optimization-based tools for distributed load control. Most of the literature on this topic primarily focus on stability properties of control algorithms developed for load-side frequency regulation    \cite{trudnowski2006power,short2007stabilization,molina2011decentralized,andreasson2013distributed,Namerikawa_distributed,Low_Zhao_distributed}. In particular, \cite{Low_Zhao_control} proves the asymptotic stability of primal-dual gradient system leading to the decentralized algorithm for load-side frequency control. However, the important issue related to the performance of these algorithms is not addressed primarily in the presence of parametric uncertainty in power network model. 

For the successful implementation of the various developed algorithms, it is important to analyze the performance of these algorithms against both parametric and additive sources of uncertainties. In this paper, we extended the primal-dual gradient system model of power swing equation with controllable loads developed in \cite{Low_Zhao_control} to incorporate the stochastic parametric uncertainty. There are various sources of parametric uncertainty in power system model. In this paper, we argue that renewable energy resources in the form of wind and solar are the potential source of parametric uncertainty in the network power system, where stochasticity in the availability of renewable energy resources will lead to stochastic bus voltages. We develop stochastic power network model with the additive as well as the multiplicative source of uncertainty. The stochastic notion of mean square stability is used to analyze the stability of network power system using stochastic stability framework for the continuous-time system developed in \cite{Sai_arxiv,pushpak2015stability,sai_acc2016}. The continuous-time stochastic stability framework discussed in this paper is  motivated from stochastic stability analysis and control results developed for linear and nonlinear systems in    \cite{diwadkar2015control,diwadkar2014stabilization,vaidya2010limitations,elia2005remote}. The developed framework is used to determine the critical variance, $\sigma_*^2$, of parametric stochastic uncertainty above which, the system is mean square unstable. We show that decentralized load-side frequency regulation algorithm developed in \cite{Low_Zhao_control} is extremely fragile to stochastic fluctuations in bus voltages. In particular, we show that, with an increase in the cost of the controllable loads, the value of critical variance, $\sigma^2_*$, above which the system is unstable decreases. Furthermore, $\sigma_*^2$ value also decreases with the increase in the penetration of the renewable energy resources in the power network.

The organization of the paper is as follows. In section \ref{sec_stochastic_model}, we provide a framework for incorporating stochastic parametric uncertainty in dynamic power system model employed for solving the load-side frequency regulation problem. Results on analyzing the mean square stability of stochastic power network model are discussed in section \ref{sec_stochastic_stability}. Simulation results on IEEE 68 bus system are presented in section \ref{sec_68bus} followed by conclusion in section \ref{sec_conclusion}.

\section{Load-side frequency control model with stochasticity} 
\label{sec_stochastic_model}

In this section, we develop an integrated load-side frequency control model with stochastic uncertainty. There are various sources of stochastic uncertainty in a power network and renewables energy sources such as solar and wind energy forms the major contributors to the uncertainty. We first discuss briefly the deterministic load-side frequency control model as developed in \cite{Low_Zhao_control}. We refer the readers to \cite{Low_Zhao_control} for more detailed discussion on this model. The basic idea behind the load-side frequency control is to control the loads, so that, the system-wide frequency can be regulated following a small change in power injection at one of the system bus. The load control comes at a cost and is measured by the aggregate disutility of the loads. The objective is to regulate the system frequency while minimizing the aggregate disutility of the loads. 

To set up the problem, consider a power network as a graph network with generator and load buses as nodes and transmission lines as edges. Let the sets $\mathcal{G}, \mathcal{L}$, and $\mathcal{E}$ respectively, denote the set of generator nodes, set of load nodes, and the set containing network edges. The cardinalities of the sets $\mathcal{G}, \mathcal{L}$, and $\mathcal{E}$ are denoted by $n_g, n_l$, and $p$ respectively. Let $\mathcal{N}$ denote the set of generator and load nodes and its cardinality is given by $n$, where $n = n_g + n_l$. The dynamic network model for regulating system frequency using load can be written as follows (we refer the readers to \cite{Low_Zhao_control} for various assumption leading to this model). 
\begin{eqnarray}
\dot \omega_j&=&-\frac{1}{M_j} (\hat d_j +d_j-P_j^m+P_j^{out}-P_j^{in}),\;\forall j\in {\cal G}\nonumber\\
0&=&\hat d_j +d_j-P_j^m+P_j^{out}-P_j^{in},\;\;\forall j\in {\cal L}\nonumber\\
\dot P_{ij}&=&W_{ij}(\omega_i-\omega_j),\;\;\forall (i,j)\in {\cal E}\label{system}
\end{eqnarray}
where $\omega_j$ is  the frequency deviation at $j^{th}$ bus, $W_{ij} :=  3 \frac{\vert V_i \vert \vert V_j \vert}{X_{ij}} \cos(\theta_i^0 - \theta_j^0),$
where $V_i, \theta_i^0$ are the voltage and nominal phase angle at bus $i$, and $X_{ij}$ is the reactance between the buses $i$ and $j$. Three types of loads are distinguished in the above model namely, frequency-sensitive, frequency-insensitive but controllable, and uncontrollable loads. The quantity, $\hat d_j$  models the frequency-sensitive load and is assumed to be of the form $\hat d_j=D_j \omega_j$, i.e., it responds linearly to  frequency deviation. Further, $P_j^m$ incorporates the part of load which is frequency-insensitive and uncontrollable and $d_j$ models the load which is frequency-insensitive but controllable. 

The objective is to design a feedback controller $d_j(\omega(t),P(t))$ for the controllable loads, so that, frequency can be regulated following disturbance, i.e., the system (\ref{system}) is globally asymptotically stable. In \cite{Low_Zhao_control}, an alternate optimization-based approach is proposed for adjusting the controllable load, $d_j$. The design of feedback controller, $d_j(\omega(t),P(t))$, is posed as an optimal load control (OLC) problem and the feedback controller is derived as a distributed algorithm to solve the OLC. The optimization problem for OLC is formulated as follows.
\begin{align}
\min_{\underline{d}\leq d \leq \overline{d},\hat{d}} \quad & {\sum_{j \in \mathcal{N}}} \left(c_j(d_j) + \frac{1}{2D_j} \hat{d}_j^2\right) \label{lfc_optm_cost}\\
\rm{subject\; to}\quad & \sum_{j \in \mathcal{N}} (d_j + \hat{d_j}) = \sum_{j \in \mathcal{N}} P_j^m \label{lfc_optm_constraints}
\end{align}
where $c_j(d_j)$ is the cost on the controllable load at bus $j$, when it is changed by $d_j$ and $\hat{d}_j := \hat D_j \omega_j$ denotes the frequency deviation, $\omega_j$ of the frequency sensitive load at bus $j$. The change in either generator or load bus $j$ is denoted by $P_j^m$ and the loads always satisfy $-\infty < \underline{d}_j \leq d_j\leq \overline{d}_j < \infty$. 
Furthermore, the cost function $c_j$ at every bus $j$ is assumed to be strictly convex and twice continuously differentiable on $[\underline{d}_j,\; \overline{d}_j]$. A dual to the OLC problem (\ref{lfc_optm_cost})-(\ref{lfc_optm_constraints}) which can be solved or implemented with a distributed architecture is written as 
\begin{align}
\max_{\nu} \quad & \textstyle{\sum_{j \in \mathcal{N}}} \Phi_j(\nu_j)\label{dual_cost} \\
\rm{subject\; to}\quad & \nu_i = \nu_j, \quad \forall (i,j) \in \mathcal{E},\label{dual_constraints}
\end{align}
where $\Phi_j(\nu_j) = c_j(d_j(\nu_j)) - \nu_j d_j(\nu_j) - \frac{1}{2} \hat D_j \nu_j^2 + \nu_j P_j^m$, $d_j(\nu_j) = \left[c_j^{'-1}(\nu_j) \right]^{\overline{d}_j}_{\underline{d}_j}$, and  $c_j^{'}(\nu_j)$ is the derivative of the cost function. Note that $\Phi_j$ is only a function of $\nu_j$, the dual variable, and all $\nu_j$ are constrainted to be equal at optimality. After some change of variables, it can be shown that the primal-dual gradient system corresponding to optimization problem (\ref{dual_cost})-(\ref{dual_constraints}) takes the form as given below \cite{Low_Zhao_control}. 
\begin{align}
 \dot{\omega}_j = & \frac{-1}{M_j} (\hat d_j  +  d_j -  P_j^m +  P_j^{out} -  P_j^{in}), \forall j \in \mathcal{G}, \label{eq_gen1} \\
0 = & \hat d_j  +  d_j -  P_j^m +  P_j^{out} -  P_j^{in},  \forall j \in \mathcal{L}, \label{eq_load} \\
 \dot{P}_{ij} = & W_{ij}  ( \omega_i -  \omega_j), \forall (i,j) \in \mathcal{E}, \label{eq_power} \\
 \hat d_j = & \hat D_j \omega_j, \forall j \in \mathcal{N}, \label{eq_freq_sensitive_load} \\
 d_j = & \left[c_j^{'-1} ( \omega_j) \right]^{\overline{{d}}_j}_{\underline{d}_j}, \; \forall j \in \mathcal{N}. \label{control_law}
\end{align} 

Notice that, the primal-dual gradient system is same as the dynamic network model for load frequency control, i.e., Eq. (\ref{system}) except for the fact that the decentralized feedback control law for the controllable load is obtained as the solution of this optimization problem. This implies that the dynamic power network model is essentially implementing the primal-dual gradient algorithm, where the feedback control law (\ref{control_law}) needs to be implemented at each controllable load for decentralized load-side frequency regulation of power network. In \cite{Low_Zhao_control}, the authors prove the existence of unique equilibrium point for primal-dual gradient system which is globally asymptotically stable. The objective of this paper is to show that, the asymptotically stable property of this equilibrium point is very fragile to the presence of multiplicative stochastic uncertainty in the power network. In the following section, we motivate the stochastic dynamic network model using uncertain and intermittent nature of wind energy. 

\subsection{Stochastic Renewables and Parametric Uncertainty}
In this subsection, we show how the parametric uncertainty enters in the power system dynamics. We motivate the parametric uncertainty in the power system dynamics through the presence of renewables where the intermittent nature of  wind and solar energy resources are modeled as stochastic random variables. The network power system with parametric uncertainty can be modeled as a set of differential algebraic equations (DAEs) written as follows:
\begin{eqnarray}
\dot x&=&f(x,y,\xi)\label{dynamic}\\
0&=&g(x,y,\xi)\label{algebra}
\end{eqnarray}
where, $x$ are the dynamic states corresponding to generator angular velocities, generator excitation voltages, power across the transmission lines, etc., and $y$ are the network as well as generator algebraic states corresponding to the bus voltages, bus angles, currents, etc. and $\xi$ denotes the stochastic parametric uncertainty. In the following discussion we will identify the precise source of parametric uncertainty in both the differential equation and algebraic equation. The network power system model with renewable wind energy resources will consists of conventional synchronous generators as well as doubly fed induction generators (DFIG). We refer the readers to \cite{pulgar2011wind} for more detailed discussion on the deterministic modeling of network power system with renewable wind generation. A zero-axis model \cite{pulgar2011wind} for DFIG obtained by neglecting the dynamics of the stator and rotor flux linkages is given by,   
\begin{eqnarray}
\frac{d\omega_r}{dt} = & \frac{\omega_s}{2H_D} \left(T_{mD} - \bar X_m I_{qs} I_{dr} +\bar X_m I_{ds} I_{qr} \right) \label{dfigd}\label{diff_eq1} 
\end{eqnarray}
where $\omega_r$ is the electrical rotor speed of DFIG, $I_{qr},I_{qs},I_{dr}, I_{ds}$ are the algebraic states of DFIG, and  $T_{mD} = \bar B \omega_b C_p(\lambda, \theta) \frac{v_{wind}^3}{\omega_r}$. The wind speed $v_{wind}$ is intermittent in nature and hence can be modeled as stochastic random variable as follows:
\begin{eqnarray}
v_{wind}^3  = & v_{{wind}_0}^3 + \xi \label{winds}
\end{eqnarray}
where $v_{{wind}_0}$ is the nominal wind speed and $\xi$ is the stochastic uncertainty. Now substituting (\ref{winds}) in the expression of $T_{mD}$ and after substituting $T_{mD}$ in (\ref{dfigd}), we notice that the random variable, $\xi$, multiply system state $\frac{1}{\omega_r}$ and hence enter parametrically in the dynamic state equation of power system. The presence of parametric uncertainty in the algebraic equation of the power system can be explained as follows. The algebraic equation corresponding to DFIG affected by the wind speed is written as follows \cite{pulgar2011wind}
%The algebraic equations describing the DFIG and dynamic equations of pitch, rotor and reactive power controller equations are given in \cite{pulgar2011wind}. For convenience of the reader, we present the algebraic equation corresponding to DFIG where the wind speed input enters as shown below. 
\[0 =  -V_{qr} +K_{P2}[K_{P1}(P_{ref} - P_{gen})+z_1 - I_{qr}]+z_2,\]
where $V_{qr}, I_{qr}$ are the algebraic states of DFIG and $z_1,z_2$ are the dynamic states of speed controller of DFIG. The power reference input is written as 
\[P_{ref} = \omega_{ref} T_{mD}. \]
Again substituting  $T_{mD} = \bar B \omega_b C_p(\lambda, \theta) \frac{1}{\omega_r} (v_{{wind}_0}^3 + \xi)$, we notice that the stochastic parametric uncertainty enters the DAE equations of DFIG.

 In the absence of stochastic uncertainty, i.e., when $\xi = 0$ and under the assumption that $\frac{\partial g}{\partial y}\neq 0$, implicit function theorem can be applied to network algebraic equation (\ref{algebra}) to eliminate the algebraic state $y$ by expressing $y=h(x)$. In the presence of stochastic uncertainty, an argument involving center manifold based reduction for stochastic system and singular perturbation theory for stochastic system \cite{arnold2013random,berglund2003geometric} the algebraic states, $y$ can be expressed as a stochastic function of states $x$ i.e., $y=h(x,\xi)$. Using this in Eq. \eqref{dynamic}, we obtain, 
\[\dot x = f(x,h(x,\xi),\xi)\]
The above system can be linearized at a nominal operating point to obtain a linear system, where the stochastic uncertainty enters the linearized system parametrically. 
%
%\begin{figure}
%\begin{center}
%\includegraphics[scale=0.8]{}
%\caption{Wind turbine generator model with uncertainty}
%\label{fig_wind_unc}
%\end{center}
%\end{figure}

In the following, we show how the stochastic uncertainty in the algebraic states propagates into the network power system. One of the algebraic states that is of particular interest to us is the bus voltages. It is clear that uncertainty in renewables will cause the voltage to behave randomly. Apart from voltages, there are other network parameters that one can assume to be uncertain and hence modeled as a stochastic random variable. For example, the frequency-sensitive loads can be assumed to be uncertain, i.e., $\hat d_j=(D_o+d\xi D_1)\omega_j$, where $d\xi$ is stochastic process. Note that the uncertainty is assumed to be parametric, where the damping coefficient is changing over time. The loads are constantly turned on and off in the grid and thereby changing the effective damping coefficient of the frequency-sensitive loads. 
Similarly, the frequency-insensitive uncontrollable loads can also be uncertain. However, in this paper, we mainly focus on the bus voltages being uncertain and analyze the impact of stochastic voltage fluctuations on the system stability. Suppose, $p_1 < n_g$ generators in the power network are now replaced with a renewable energy source. As we are modeling the voltages at renewable buses to be stochastic, the voltages at buses connecting the renewables are also stochastic. Let ${\cal S}$ be the set of a pair of buses whose voltages are stochastic and its cardinality be denoted by $s < p$, where $p$ is the number of total links in the network. 
Under the assumption that the nominal voltages are $1$ p.u., stochastic voltage fluctuations are modeled as follows:
\begin{eqnarray}
|V_i||V_j|= 1+\sigma d\xi_{k} \;\;\forall (i,j)\in{\cal S}\label{stoc_voltage}
\end{eqnarray}
where $d\xi_{k}$ is the standard Wiener process and $\sigma$ is the standard deviation assumed to be same for all links. We use unique index $k$ to identify and denote the edge pair $(i,j)\in {\cal S}$ and hence $k=1,\ldots, s$. 

 Furthermore $d\xi_{k}$ is assumed to be independent of $d\xi_\ell$ for $k\neq \ell$. For the simplicity of presentation, we assume that all the links in the networks have the same variance, $\sigma^2$. 
 
 \begin{remark}Notice that instead of assuming individual bus voltages to be random,  we are assuming product of voltages to be random. This is a modeling assumption and is made to avoid technical difficulty that arises while  multiplying two stochastic processes. 
\end{remark} 
 We now make following assumption on the cost function $c_j$.
\begin{assumption} \label{cost_assumption}We assume that the cost function $c_j$ is quadratic and hence of the form $c_j(d_j)=\frac{d_j^2}{2\alpha_j}$ for all $j\in {\cal N}$. Furthermore, we neglect the saturation constraints on the cost function and hence optimal decentralized control law for the controllable load is of the form $d_j=\alpha_j\omega_j$.
\end{assumption}

\section{Stochastic stability of power network}
\label{sec_stochastic_stability}
The deterministic dynamic network model (\ref{eq_gen1})-(\ref{control_law}) can be combined with the stochastic voltage fluctuation model (\ref{stoc_voltage}) to write a power system model with multiplicative stochastic uncertainty. First, we write the deterministic power network model (\ref{eq_gen1})-(\ref{control_law}) using Assumption \ref{cost_assumption} in compact form after eliminating the algebraic equation (\ref{eq_load}) as  follows:
\begin{equation}
\begin{aligned}
\dot{\omega}_G = & - M_G^{-1} (D_G \omega_G + E_G P - P_G^m)  \\
\dot P = & W (E_G^{\top} \omega_G + E_L^{\top} D_L^{-1}(P_L^m-E_L P)) 
\end{aligned}
\label{det_model}
\end{equation}
%\begin{align}
%\dot{\omega}_G = & - M_G^{-1} (D_G \omega_G + E_G P - P_G^m) \label{eq_gen_full} \\
%0 = & D_L \omega_L + E_L P - P_L^m \label{eq_load_full} \\
%\dot P = & W (E_G^{\top} \omega_G + E_L^{\top} \omega_L)  \label{eq_power_full}
%\end{align}
%
where $\omega_G \in \mathbb{R}^{n_g}$, $\omega_L \in \mathbb{R}^{n_l}$ and $P \in \mathbb{R}^{p}$. 
Observe that $M_G, D_G, D_L$ are diagonal matrices. The weight matrix, $W \in \mathbb{R}^{p \times p}$ is defined as a diagonal matrix with entries $W_{ij}$ for all $(i,j) \in \mathcal{E}$. The matrices $E_G$ and $E_L$ are the incidence matrices corresponding to generator and load buses respectively. 
%After eliminating the algebraic equation (\ref{eq_load_full}), we write the deterministic dynamic model as follows
%\begin{equation}
%\begin{aligned}
%\dot{\omega}_G = & - M_G^{-1} (D_G \omega_G + E_G P - P_G^m)  \\
%\dot P = & W (E_G^{\top} \omega_G + E_L^{\top} D_L^{-1}(P_L^m-E_L P))  
%\end{aligned}
%\label{det_model}
%\end{equation}
Next, we incorporate the uncertainty in the above deterministic model. Using Eq. (\ref{stoc_voltage}), we can write the stochastic link weight, $W_{ij}$ as follows
$W_{ij}=3\frac{(1+\sigma d\xi_k)}{X_{ij}}\cos (\theta_i^0-\theta_j^0).$
Define $W_{ij}^0:=3\frac{1}{X_{ij}}\cos (\theta_i^0-\theta_j^0)$, and hence, we have 
\begin{eqnarray}
W_{ij}=W_{ij}^0+\sigma W_{ij}^0  d\xi_k\label{stoc_weight}.
\end{eqnarray}
Substituting (\ref{stoc_weight}) in (\ref{det_model}), we obtain following stochastic power network model with some abuse of notation. 
{\small
\begin{eqnarray}
\dot{\omega}_G &= & - M_G^{-1} (D_G \omega_G + E_G P - P_G^m) \label{stoc_model}\\
\dot P &= & (W^0+\sigma W^0\circ d\xi) (E_G^{\top} \omega_G + E_L^{\top} D_L^{-1}(P_L^m-E_L P))  \nonumber
\end{eqnarray}
}
where $W^0={\rm diag} (W_{ij}^0)$ for $(i,j)\in {\cal E}$, $d\xi$ is a diagonal matrix with zeros and $d\xi_1,\ldots,d\xi_s$. The nonzero entries of $d\xi$ correspond to the links given in set ${\cal S}$. The symbol, $\circ$ denotes element-wise matrix multiplication. To represent the system (\ref{stoc_model}) in standard robust control form (refer to Fig. \ref{fig_cl_sys_unc}), we rewrite the system equation in slightly different form. We first define $u:=[\omega_G^\top\;\ P^\top]^\top$ and
\begin{align*}
du = & A u dt + b dt + \textstyle{\sum_{k=1}^{s}} \sigma \bar B_k ( \bar C_k u +  \bar G_k) d\xi_k
\end{align*}
where, 
\begin{small}
\begin{align*}
& A = \begin{bmatrix}
-M_G^{-1} D_G & -M_G^{-1} E_G \\ W^0 E_G^{\top} & -W^0E_L^{\top}D_L^{-1} E_L \end{bmatrix}, 
b = \begin{bmatrix}
M_G^{-1}P_G^m \\
 W^0 E_L^{\top} D_L^{-1} P_L^m
\end{bmatrix}, \\
& \bar B_k = \begin{bmatrix}
0 \\
e_k
\end{bmatrix}, 
%\hat B_i = \begin{bmatrix}
%0 \\ \vdots \\ 1 \\ \vdots \\ 0
%\end{bmatrix},
\bar C_k = \begin{bmatrix}
(W^0 E_G^{\top})_k & -(W^0 E_L^{\top} D_L^{-1} E_L)_k
\end{bmatrix},  \\
& G_k = \begin{bmatrix}
( W^0 E_L^{\top} D_L^{-1} P_L^m)_k
\end{bmatrix},
\end{align*}
\end{small}
where $e_k\in \mathbb{R}^p$ is a vector of all zeros except for $1$ in the $k^{th}$ location. Chose $u^*$, such that, $A u^* + {b} = 0$ and define $ v = u - u^*$ to shift the equilibrium of the deterministic system to origin. Then, we have 
{\small
\begin{align}
d v = & A  v dt + \sum_{k=1}^{s} \sigma \bar B_k \bar C_k  v d\xi_k + \sum_{k=1}^{s} \sigma \bar B_k ( \bar C_k u^* + \bar G_k) d\xi_k \label{eq_full_stoch_sys}
\end{align} }
The matrix $A$ is singular and consists of non-zero null space. In the following, we perform a change of coordinates to separate the dynamics on and off the null space of $A$ matrix. 

Let ${\cal N}_s(A)$ and ${\cal R}_s(A)$ denotes the set of vectors which span the null space and range space of $A$. Then, define the transformation matrix, $V = \begin{bmatrix}
{\cal N}_s(A) & {\cal R}_s(A)
\end{bmatrix}$.
Using the transformation matrix, $V$, we define $[dx^\top\;\ dy^\top]^\top:=V^\top dv$. It can be shown, after the transformation, various transformed matrices has the following structure. 
\begin{small}
\begin{align*}
\begin{bmatrix}
0 & A_{yx} \\ 0 &  A_{xx}
\end{bmatrix} := & V^\top AV \\
\begin{bmatrix}
0 & (V^{\top}  \bar B_k \bar C_k V)_{yx} \\
0 & (V^{\top}  \bar B_k  \bar C_k V)_{xx}
\end{bmatrix} := & V^{\top} \bar B_k  \bar C_k V \\
\begin{bmatrix}
(V^{\top} \bar  B_k (\bar{C}_k u^*+\bar{G}_k))_{yx} \\
(V^{\top} \bar  B_k (\bar{C}_k u^*+\bar{G}_k))_{xx} 
\end{bmatrix}:= & V^{\top} \bar B_k (\bar {C}_k u^*+\bar {G}_k)
\end{align*}
\end{small}
where $A_{xx}\in \mathbb{R}^{n\times n}$, $A_{yx}\in\mathbb{R}^{n_l\times n} $. Using the fact that $\bar B_k$ and $\bar C_k$ are column vector and row vector respectively,  the matrix $\bar B_k \bar C_k$ is rank deficient. It is easy to show that $(V^{\top}  \bar B_k  \bar C_k V)_{xx}$ is also rank deficient and hence we write $(V^{\top}  \bar B_k  \bar C_k V)_{xx}\in \mathbb{R}^{n\times n}$ also as a product of column vector and row vector. In particular, we write, 
$B_kC_k:=(V^{\top}  \bar B_k  \bar C_k V)_{xx},$
where $B_k$ and $C_k$ are $n$ dimensional column vector and row vector respectively. Note that, the decomposition of the matrix as a product of two rank one matrices is not unique, but the final stability results are independent of the decomposition. Defining  $G_k:=(V^{\top} \bar  B_k (\bar{C}_k u^*+\bar{G}_k))_{xx} \in \mathbb{R}^n$ and ${\cal A}:=A_{xx}$ we write Eq. (\ref{eq_full_stoch_sys}) in the transformed coordinates as follows
% \begin{small}
\begin{align}
dy = & A_{yx} x dt + \textstyle{\sum_{k=1}^{s}} \sigma(V^{\top} \bar B_k \bar C_kV)_{yx} x d\xi_k \nonumber \\
& + \textstyle{\sum_{k=1}^{s}} \sigma(V^{\top} \bar B_k (\bar {C}_ku^*+\bar {G}_k))_{yx} d\xi_k, \label{eq_synchro} \\
dx = &  {\cal A} x dt + \textstyle{\sum_{k=1}^{s}} \sigma B_k C_k x d\xi_k + \textstyle{\sum_{k=1}^{s}} \sigma G_k d\xi_k, \label{eq_nonsynchro}
\end{align}
% \end{small}
%
where $x \in \mathbb{R}^n$ and $y \in \mathbb{R}^{{n_g} - n}$. We notice that the $y$ dynamics is completely driven by $x$ dynamics and noise processes whereas, $x$ dynamics is not influenced by $y$ dynamics. Hence, the necessary condition for the stability of the above system of equations (\ref{eq_synchro})-(\ref{eq_nonsynchro}) is that, $x$ dynamics is stable. We define following notion of second moment bounded stability for system (\ref{eq_nonsynchro}).
\begin{definition}\label{def_smb}[Second Moment Bounded]
System (\ref{eq_nonsynchro}) is said to be second moment bounded if there exists a positive constant $\bar K$, such that
$\lim_{t\to \infty} E[x(t)^\top x(t)]\leq \bar K$ for all $x(0)\in \mathbb{R}^n$.
\end{definition}
Now, consider the stochastic power network without the additive noise term as follows.
%The stochastic differential equations describing the stochastic power network without additive noise is given by 
\begin{align}
dx = & {\cal A} x dt + \textstyle{\sum_{i=1}^{s}} \sigma B_k C_k x d\xi_k. \label{eq_nonsynchro_noadd}
\end{align}
Following notion of mean square exponential stability can be introduced for (\ref{eq_nonsynchro_noadd}).
 \begin{definition}\label{def_mse}[Mean Square Exponentially Stable]
System (\ref{eq_nonsynchro_noadd}) is mean square exponentially stable, if there exist positive constants $K$ and $\beta$, such that
\[E[ x(t)^\top x(t) ]\leq K\exp^{-\beta t}E[{x(0)}^\top x(0)]\;\; \forall\; x(0)\in \mathbb{R}^n.\]
\end{definition}
The connection between the stability of systems given in Eqs. \eqref{eq_nonsynchro} and \eqref{eq_nonsynchro_noadd} is established in the following Lemma. 
\begin{lemma} The system (\ref{eq_nonsynchro_noadd}) is mean square exponentially stable if and only if system (\ref{eq_nonsynchro}) is second moment bounded. 
\label{lemma_equivalence}
\end{lemma}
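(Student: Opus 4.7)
The plan is to reduce both conditions to a common spectral criterion on the Lyapunov operator $\mathcal{L}(X) := \mathcal{A} X + X \mathcal{A}^\top + \sigma^2 \sum_{k=1}^{s} B_k C_k X C_k^\top B_k^\top$ acting on the space of symmetric matrices, namely that $\mathcal{L}$ is Hurwitz. First I would apply It\^o's formula to the outer product $x(t) x(t)^\top$. For the homogeneous SDE (\ref{eq_nonsynchro_noadd}) this yields the linear matrix ODE $\dot M_h = \mathcal{L}(M_h)$ with $M_h(t) = E[x(t) x(t)^\top]$, so that mean square exponential stability in the sense of Definition \ref{def_mse} is equivalent to $\mathcal{L}$ being Hurwitz. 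For the inhomogeneous SDE (\ref{eq_nonsynchro}) the analogous calculation produces $\dot M = \mathcal{L}(M) + R(t)$, where $R(t) = \sigma^2 \sum_k [B_k C_k m(t) G_k^\top + G_k m(t)^\top C_k^\top B_k^\top + G_k G_k^\top]$ is affine in the first moment $m(t) = E[x(t)]$, itself governed by the deterministic ODE $\dot m = \mathcal{A} m$.

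For the forward direction, I would note that MSE stability of (\ref{eq_nonsynchro_noadd}) forces both $\mathcal{A}$ and $\mathcal{L}$ to be Hurwitz (the former via $|E[x]|^2 \le E[|x|^2]$). Hence $m(t) \to 0$ exponentially, $R(t)$ is uniformly bounded, and the stable affine matrix ODE $\dot M = \mathcal{L}(M) + R(t)$ has uniformly ultimately bounded solutions regardless of $M(0)$, which is precisely Definition \ref{def_smb}.

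For the reverse direction, I would exploit the affine linearity of (\ref{eq_nonsynchro}) and decompose $x(t;x_0) = x_h(t;x_0) + x_p(t)$, where $x_h$ solves (\ref{eq_nonsynchro_noadd}) with $x_h(0) = x_0$ and $x_p$ solves (\ref{eq_nonsynchro}) with $x_p(0) = 0$, both driven by the same Wiener paths. Pathwise uniqueness then yields $x(t;\lambda x_0) = \lambda x_h(t;x_0) + x_p(t)$, so
\[
E[|x(t;\lambda x_0)|^2] = \lambda^2 E[|x_h(t;x_0)|^2] + 2\lambda E[x_h(t;x_0)^\top x_p(t)] + E[|x_p(t)|^2].
\]
Second moment boundedness requires the left side to have $\limsup_t$ bounded by the same constant $\bar K$ for every $\lambda$; sending $\lambda \to \infty$ and controlling the cross term with Cauchy--Schwarz then forces $\limsup_t E[|x_h(t;x_0)|^2] = 0$ for every $x_0$. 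Since $M_h(t)$ is positive semidefinite and its trace vanishes in the limit, $M_h(t) \to 0$ along every rank-one initial matrix, and by linearity along every symmetric initial matrix. Thus $\mathcal{L}$ generates an asymptotically vanishing semigroup on a finite-dimensional space, hence is Hurwitz, and the decay is automatically exponential, giving MSE stability of (\ref{eq_nonsynchro_noadd}).

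The main obstacle is this reverse implication, specifically promoting the qualitative conclusion $\limsup_t E[|x_h(t;x_0)|^2] = 0$ to the uniform exponential rate demanded by Definition \ref{def_mse}. The critical observation is that $M_h(t)$ evolves by a finite-dimensional linear ODE, so pointwise convergence to zero from the spanning family of rank-one initial data pins the spectrum of $\mathcal{L}$ strictly inside the open left half-plane, and standard linear semigroup estimates then deliver a uniform exponential bound on $E[|x_h(t)|^2]$ in terms of $|x(0)|^2$. A secondary technical point, justifying the pathwise decomposition under common driving noise, is routine from the linearity and global Lipschitz structure of the SDE.
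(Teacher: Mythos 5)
Your proposal is correct, and for the necessity direction it coincides with the paper's argument: both pass to the second-moment propagation equation (your Lyapunov operator $\mathcal{L}$ is the paper's vectorized $\mathscr{A} = \mathcal{A}\oplus\mathcal{A} + \sum_k \sigma_k^2 (B_k\otimes B_k)$ after the $\phi$-vectorization), observe that mean square exponential stability of the homogeneous system makes this operator Hurwitz, note that the first moment obeys $\dot\mu = \mathcal{A}\mu$ with $\mathcal{A}$ Hurwitz so the $\mu$-dependent forcing decays, and conclude that the affine covariance equation settles to the finite limit $-\mathcal{L}^{-1}\bigl(\sum_k G_kG_k^\top\bigr)$ uniformly in the initial condition. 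Where you genuinely diverge is the sufficiency direction. The paper writes out the variation-of-constants formula for the vectorized affine ODE $\dot{\bar q} = \mathscr{A}\bar q + \mathscr{B}$ and simply asserts that finiteness of the limit holds ``if and only if $\mathscr{A}$ is Hurwitz''; this is the weakest step of their proof, since finiteness of a limit for a forced linear system does not by itself pin the spectrum without invoking arbitrariness of the initial data, and the formula as printed also mishandles the $\mu$-dependent forcing (those terms should appear as convolution integrals, not as $e^{\mathscr{A}t}$ times constants). You instead exploit pathwise superposition, $x(t;\lambda x_0) = \lambda x_h(t;x_0) + x_p(t)$, and let $\lambda\to\infty$ against the uniform bound $\bar K$ of Definition \ref{def_smb} to force $E[|x_h(t;x_0)|^2]\to 0$ for every $x_0$; pointwise decay of the positive semidefinite $M_h(t)$ on the spanning family of rank-one initial matrices then makes $\mathcal{L}$ Hurwitz by finite-dimensional linearity, and exponential decay follows. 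This buys a rigorous justification of exactly the step the paper glosses over, at the modest cost of the extra bookkeeping with the cross term (which your Cauchy--Schwarz estimate handles correctly) and of noting that Definition \ref{def_smb} supplies a single $\bar K$ valid for all initial conditions, which is what makes the $\lambda$-scaling argument close.
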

\begin{proof}
Refer to Appendix for the proof. 
\end{proof}
Using Lemma \ref{lemma_equivalence}, it suffices to analyze the mean square exponential stability of system without additive noise given in Eq. \eqref{eq_nonsynchro_noadd}. 
%From Lemma \ref{lemma_equivalence}, it is enough to analyze  the mean square exponential stability of the system without additive noise. Hence, the resultant stochastic differential equation with only multiplicative uncertainty is 
%%
%\begin{align}
%dx = & A x dt + \sum_{i=1}^{p} \sigma_i  B_i C_i x d\xi_i.   \label{eq_sys}
%\end{align}
%% 

In order to apply the results of mean square stability from \cite{Sai_arxiv,pushpak2015stability}, system \eqref{eq_nonsynchro_noadd} should be rewritten in the standard robust control form with a deterministic system in feedback with stochastic uncertainty (refer to Fig. \ref{fig_cl_sys_unc}). In doing so, we rewrite the stochastic power network model as feedback interconnection of the deterministic system and stochastic uncertainty. 
\begin{wrapfigure}[12]{l}{4.1 cm}
\begin{center}
{{\includegraphics[height=1.4 in]{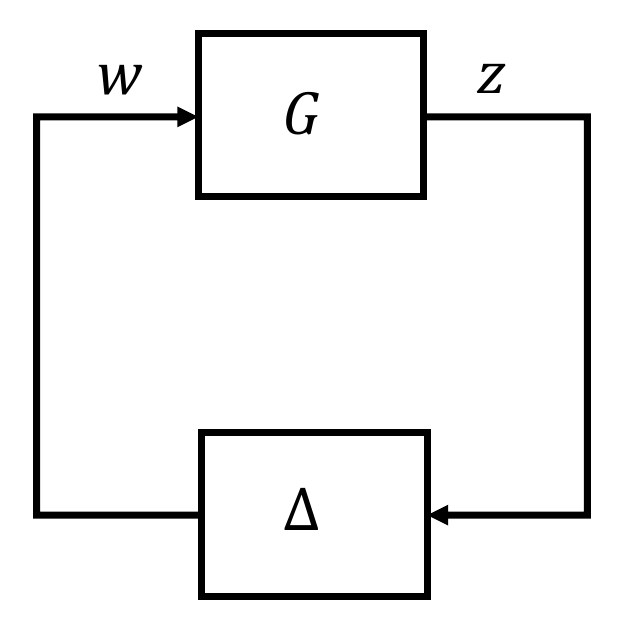}}}
\caption{Feedback interconnection of ${\cal G}$ \& $\Delta$, ${\cal F}({\cal G},\Delta).$}
\label{fig_cl_sys_unc}
\end{center}
\end{wrapfigure}
The deterministic part of the system is given by 
\begin{equation}
\mathcal{G}: \left\{
\begin{aligned}
\dot{x} = & \mathcal{A} x + \mathcal{B} w \\
z = & \mathcal{C} x 
\end{aligned}
\right. 
\label{eq_mean_sys}
\end{equation}
where the control and disturbance variables are respectively $z \in \mathbb{R}^{s}$ and $w \in \mathbb{R}^{s}$. The matrix $\mathcal{B}$ is formed by stacking  $B_k$'s in the columns and  $\mathcal{C}$ matrix is formed by stacking  $C_k$'s in the rows. 
This deterministic system, $\mathcal{G}$ now interacts with the stochastic uncertainty,  $\Delta$ and the interconnected system is denoted by $\mathcal{F}(\mathcal{G},\Delta)$. The stochastic power network model is written as feedback interconnection of deterministic and stochastic uncertainty as follows: 
%The interconnection of the deterministic system, $\mathcal{G}$ with the stochastic uncertainty, $\Xi$ is denoted by $\mathcal{F}(\mathcal{G},\Xi)$ and is shown below. 
% 
\begin{equation}
\mathcal{F}(\mathcal{G},\Delta): \left\{
\begin{aligned}
\dot{x} = & \mathcal{A} x + \mathcal{B} w \\
z = & \mathcal{C} x \\
w = & \Delta z
\end{aligned} \right. 
\label{eq_full_sys}
\end{equation}
where the matrix, $\Delta$, is a diagonal matrix whose entries are $\sigma \rm{diag}( \frac{d\xi_1}{dt},\dots,\frac{d\xi_{s}}{dt})$. The stochastic uncertainty is interacting with the deterministic system through the control and disturbance variables. 
%
%
%The following assumption is required for the stochastic stability analysis. 
%%
%\begin{assumption}
%(a) The system $\mathcal{G}$ is internally stable, that is, $\mathcal{A}$ is Hurwitz and moreover $\mathcal{G}$ is considered to be stabilizable, detectable and strictly proper. \\
%(b) The initial state $x(0)$ of $\mathcal{G}$ has bounded variance and is independent from $\xi_i(t)$ for each $i =1,\dots,p$. 
%\label{sys_assumptions}
%\end{assumption}
%
Clearly, for the stochastic system to be mean square stable, we require that the deterministic system is stable, i.e., ${\cal A}$ is Hurwitz. Note that, in the power system model, the matrix $\cal A$ is Hurwitz. Under the assumption that the system matrix $\cal A$ is Hurwitz, we have following necessary and sufficient condition for mean square stability.
\begin{theorem} \cite{Sai_arxiv} \label{theorem_stability} The feedback interconnection  ${\cal F}(\mathcal{G},\Delta)$ is mean square exponentially stable if and only if $\sigma \rho ( \hat{G}  )<1,$
where $\rho$ stands for the spectral radius of a matrix and 
%and ${\Sigma} := {\rm diag}(\sigma_1^2,\cdots, \sigma_p^2)$, 
\begin{eqnarray}{\hat G} := \begin{pmatrix}\parallel \mathcal{G}_{11}\parallel_2^2&\ldots &\parallel \mathcal{G}_{1s}\parallel_2^2\\\vdots&\ddots&\vdots\\\parallel \mathcal{G}_{s1}\parallel_2^2&\ldots&\parallel \mathcal{G}_{ss}\parallel_2^2 \end{pmatrix}\label{G_fornorm}.
\end{eqnarray}
The notation, $\parallel \mathcal{G}_{ij}\parallel_2$ is the ${\cal H}_2$ norm of the system $\mathcal{G}$ from disturbance input, $j$ and controlled output, $i$. 
%\end{enumerate}
\end{theorem}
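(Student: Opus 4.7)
My approach is to apply It\^o-based stochastic Lyapunov analysis to the reduced SDE \eqref{eq_nonsynchro_noadd}, then exploit the rank-one structure $B_kC_k$ of each noise channel to reduce the resulting matrix Lyapunov inequality to a scalar Perron--Frobenius eigenvalue condition on $\hat{G}$.

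\emph{Step 1 (stochastic Lyapunov reformulation).} I would take a quadratic candidate $V(x)=x^\top P x$ with $P\succ 0$ and apply It\^o's formula along \eqref{eq_nonsynchro_noadd}. Because the $k$-th diffusion coefficient is $\sigma B_k C_k x$, the infinitesimal generator evaluates to
\begin{equation*}
\mathcal{L}V(x)=x^{\top}\!\Bigl(\mathcal{A}^{\top}P+P\mathcal{A}+\sigma^{2}\textstyle\sum_{k=1}^{s} C_k^{\top}(B_k^{\top}P B_k) C_k\Bigr)x.
\end{equation*}
By the standard Kushner/Khasminskii stochastic Lyapunov theorem for linear SDEs with multiplicative noise, mean square exponential stability of \eqref{eq_nonsynchro_noadd} is equivalent to the existence of $P\succ 0$ and $Q\succ 0$ with
\begin{equation*}
\mathcal{A}^{\top}P+P\mathcal{A}+\sigma^{2}\textstyle\sum_{k=1}^{s} C_k^{\top}(B_k^{\top}P B_k) C_k=-Q.
\end{equation*}

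\emph{Step 2 (reduction to scalars via rank-one structure).} Since each $B_k$ is a column and each $C_k$ a row, the quantities $q_k:=B_k^{\top}P B_k$ are scalars capturing the Lyapunov energy coupled into channel $k$. Because $\mathcal{A}$ is Hurwitz, the Lyapunov equation above can be integrated, giving
\begin{equation*}
P=\int_{0}^{\infty} e^{\mathcal{A}^{\top}t}\Bigl(Q+\sigma^{2}\textstyle\sum_{k} q_k C_k^{\top}C_k\Bigr)e^{\mathcal{A}t}\,dt.
\end{equation*}
Substituting this expression back into $q_i=B_i^{\top}P B_i$ and recognising the SISO $\mathcal{H}_2$ identity $\|\mathcal{G}_{ki}\|_{2}^{2}=\int_{0}^{\infty}(C_k e^{\mathcal{A}t} B_i)^{2}\,dt$, one obtains the closed linear system $q=b+\sigma^{2}\hat{G}^{\top}q$ in $\mathbb{R}^{s}$, where $b_i=B_i^{\top}\!\int_{0}^{\infty} e^{\mathcal{A}^{\top}t}Qe^{\mathcal{A}t}dt\,B_i>0$.

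\emph{Step 3 (Perron--Frobenius).} The matrix $\hat{G}$ is entrywise nonnegative, so by Perron--Frobenius a strictly positive solution $q$ exists for every strictly positive $b$ if and only if $I-\sigma^{2}\hat{G}^{\top}$ is a nonsingular $M$-matrix, i.e.\ iff $\sigma\rho(\hat{G})<1$ in the paper's normalisation (using $\rho(\hat{G}^{\top})=\rho(\hat{G})$). Reversing the construction, any such $q$ yields a $P\succ 0$ solving the Lyapunov equation, and Step~1 closes the equivalence with mean square exponential stability of $\mathcal{F}(\mathcal{G},\Delta)$.

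\emph{Expected obstacle.} The delicate points are (i) invoking the correct version of the stochastic Lyapunov theorem that gives equivalence, not merely sufficiency, between the matrix inequality and mean square exponential stability under multiplicative noise, and (ii) the fact that the factorisation $(V^{\top}\bar{B}_k\bar{C}_kV)_{xx}=B_kC_k$ is not unique. For (ii) I would verify that $\|\mathcal{G}_{ij}\|_{2}^{2}$ and hence the final spectral-radius criterion depend only on the product $B_kC_k$ and not on the individual representatives, so that the statement is intrinsic to the system data.
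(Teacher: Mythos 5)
Your proof is essentially correct, but be aware that the paper does not actually prove this theorem: it is imported verbatim from \cite{Sai_arxiv} with the proof deferred there. The only related machinery the paper exhibits (in the appendix, for Lemma \ref{lemma_equivalence}) is the \emph{primal} route --- vectorize the covariance ODE and characterize mean square stability by Hurwitzness of $\mathscr{A}={\cal A}\oplus{\cal A}+\sum_k\sigma_k^2\,B_k\otimes B_k$ --- which is presumably how the cited reference proceeds, by relating the spectral abscissa of $\mathscr{A}$ to $\rho(\hat G)$. You take the \emph{dual} route: the generalized Lyapunov equation $\mathcal{A}^\top P+P\mathcal{A}+\sigma^2\sum_k C_k^\top(B_k^\top PB_k)C_k=-Q$, which the rank-one channel structure collapses to the $s$-dimensional affine fixed point $q=b+\sigma^2\hat G^\top q$, followed by Perron--Frobenius/M-matrix theory. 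This is the standard mean-square small-gain argument and each step is sound: the It\^o generator computation, the integral representation of $P$ under Hurwitz $\mathcal{A}$, the identity $\|\mathcal{G}_{ki}\|_2^2=\int_0^\infty(C_ke^{\mathcal{A}t}B_i)^2\,dt$, necessity from the existence of $q>0$ with $\sigma^2\hat G^\top q=q-b<q$ entrywise, and sufficiency by reconstructing $P\succ0$ from the Neumann-series solution. What your route buys is an explicit, coordinate-free interpretation of the criterion as a positivity condition on channel energies; what the Kronecker route buys is a single linear-algebra object ($\mathscr{A}$) that also handles the additive-noise comparison in Lemma \ref{lemma_equivalence}.

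Two caveats. First, your derivation yields $\sigma^2\rho(\hat G)<1$, whereas the theorem as printed says $\sigma\rho(\hat G)<1$ while the surrounding text calls $\sigma_*^2$ the critical \emph{variance} and sets $\sigma_*=1/\rho(\hat G)$; your $\sigma^2$ version is the one consistent with $d\xi_k$ being standard Wiener increments and with $\hat G$ containing \emph{squared} ${\cal H}_2$ norms, so you are right to flag this as a normalization discrepancy rather than silently adopt the printed form. Second, your claim that the individual $\|\mathcal{G}_{ij}\|_2^2$ depend only on the products $B_kC_k$ is false as stated: rescaling $B_k\mapsto\alpha_kB_k$, $C_k\mapsto\alpha_k^{-1}C_k$ multiplies $\hat G_{ij}$ by $(\alpha_j/\alpha_i)^2$. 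The correct observation is that this is a diagonal similarity $\hat G\mapsto D^{-1}\hat GD$ with $D=\mathrm{diag}(\alpha_i^2)$, so the spectral radius --- and hence the stability criterion --- is invariant even though the entries are not.
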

Refer to \cite{Sai_arxiv} for the proof. The result of the above theorem can be used to compute critical value of $\sigma_*$ above which, the system is mean square unstable. In particular, the critical value, $\sigma_*$ is given by, $\sigma_*=\frac{1}{\rho(\hat G)}$. 
%In \cite{Sai_tac}, we have developed systematic optimization-based framework for the design of output feedback-based optimal controller robust to parametric uncertainty i.e., the controller that can tolerate largest variance. However under the assumption that the parametric uncertainty does not affect the input and full state feedback, the optimal robust state feedback controller, $Kx$, can be obtained as the solution of following optimization problem.
%\begin{eqnarray}
%&\min_K \;\;{\rm Trace} (P)\nonumber\\
%&{\rm s.t.}\;\;{\cal A}_c^\top P+P{\cal A}_c+\sum_{k=1}^s \sigma^2 B_kC_k PC_k^\top B_k^\top<0\nonumber\\
%& {\cal A}_c:={\cal A}+BK
%\end{eqnarray}

\section{IEEE 68 bus system}
\label{sec_68bus}
In this section, we consider the IEEE $68$ bus network to analyze the load-side primary frequency control with stochastic load voltages. 
IEEE $68$ bus New England/New York interconnection test system consists of $16$ generator buses and $52$ load buses. The single-line diagram of the $68$ bus test system is shown in Fig. \ref{fig_68bus}. This system contains induction motor loads, constant power loads, and controllable loads. The relevant data for this system is obtained from the data files of power system toolbox \cite{Chow_toolbox}. 
%
%\begin{figure}[H]
%\begin{center}
%\includegraphics[scale=0.2]{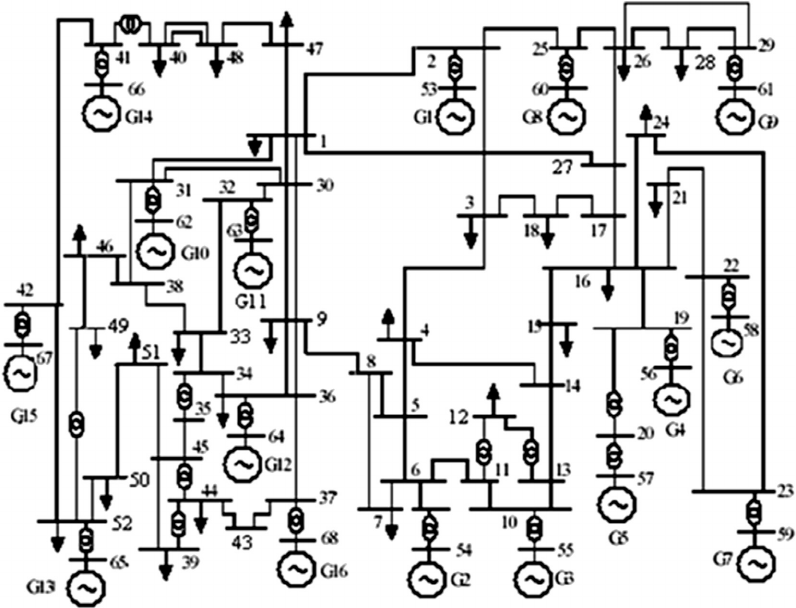}
%\caption{Single-line diagram of IEEE $68$ bus system}
%\label{fig_68bus}
%\end{center}
%\end{figure}
%%

As discussed in Section \ref{sec_stochastic_model}, we include the renewables in the power network, and the uncertain, intermittent nature of renewables are modeled into the power network by considering the voltages to be stochastic. 
%The changes in renewable energy reflect on the frequency of the network, and the controllable loads have to be controlled accordingly to regulate the frequency. 
Changing the controllable loads involves a cost measured in the form of aggregate disutility of loads, and it has to be minimized.

In this $68$ bus system, there are $29$ induction motor loads which are sensitive to frequency, $35$ controllable loads and the remaining loads are uncontrollable frequency insensitive loads. Now, we replace few of the classical generators with renewable energy sources such as either solar or wind power. As there are no large moving parts at these renewables, the inertia values at these buses are relatively smaller, when compared to the classical generators \cite{Gautam_Vittal_wind_impact}. Further, integration of renewables into the power network increases the damping slightly \cite{Slootweg_wind_impact}. Therefore, we assume a relatively smaller value for inertia at renewables location and relatively bigger value for damping at those places. For the simulation purpose, we consider the renewable energy source at buses $54,55,56,60,63,64$ and $65$ replacing the generator buses. Now, the buses connecting the renewable buses are $6, 10, 19, 25, 32, 36$ and $52$ and hence, $s = 7$.

In the given data, the inertia values at generator buses lie between $1-5$, whereas, at the buses with renewable energy, we have considered it as $0.5$. Similarly, the damping values at generator buses are in the range of $0-5$, and we consider the damping at renewable energy buses to be $6$. The simulations results discussed below are consistent with the range of inertia values between $0.5-1$ and damping values between $5-6$. 
%
%\begin{figure}[H]
%\begin{center}
%\includegraphics[scale=0.4]{}
%\caption{Variation of critical variance with cost}
%\label{fig_cost_vs_variance}
%\end{center}
%\end{figure}
%

\begin{figure}[b]
\centering
\begin{minipage}{.22\textwidth}
  \centering
  \includegraphics[scale = 0.14]{IEEE_68bus.png}
  \caption{Single-line diagram of IEEE $68$ bus system}
  \label{fig_68bus}
\end{minipage}%
\begin{minipage}{.28\textwidth}
  \centering
  \includegraphics[scale = 0.28]{cost_vs_variance_v3}
  \caption{Variation of critical variance with cost}
  \label{fig_cost_vs_variance}
\end{minipage}
\end{figure}

Next, we analyze the effect of cost on controllable loads on the load-side primary frequency control with stochastic renewables. If the cost on controllable loads is high, then it is difficult to vary the controllable loads. Using the analytical framework discussed in section \ref{sec_stochastic_stability}, we identify the critical variance that can be tolerated in the voltages while maintaining the mean square exponential stability of power network with a decentralized controller.
 %to control the frequency insensitive controllable loads is very small and is given by
The critical variance value, $\sigma_*^2$, is observed to be very small in the order of $10^{-3}$ with the maximum variance value of $1.9\times 10^{-3}$ which is obtained when the cost coefficient on the controllable load is equal to $\alpha=0.5$ (Refer to Assumption \ref{cost_assumption}). In Fig. \ref{fig_cost_vs_variance}, observe that, if the cost of controllable loads is further increased, the critical variance that can be tolerated by the stochastic power network reduces. It is important to notice that, for most of the cost values on controllable loads, the critical variance is very small. In generating Fig. \ref{fig_cost_vs_variance}, all the damping values at generator and load buses are kept constant.

Observe that, if the variance that can be tolerated by the system is small, then the system is on the verge of stability. This nature of the system can be seen, when we consider the stochastic voltages with a variance, $\sigma^2 > \sigma^2_*$, the frequencies grow out of bounds, and the power network becomes mean square unstable. This phenomenon is seen in Figs. \ref{fig_voltage_vs_time} and \ref{fig_frequency_vs_time_all_buses}. 
The stochastic voltage variation with respect to time is shown in Fig. \ref{fig_voltage_vs_time}. 
%The stochasticity in the voltages is modeled to capture the uncertain and intermittent nature of renewables. 
In Fig. \ref{fig_voltage_vs_time}, for the chosen $\sigma^2 > \sigma^2_*$, it is important to emphasize that although the voltages values lie within the safe operating limits of $0.95$ pu to $1.05$ pu, but the frequencies violate the operating limits as seen in Fig. \ref{fig_frequency_vs_time_all_buses}. This shows the fragility of the decentralized controller in the presence of renewables, as it is inadequate to regulate the frequency by means of controllable loads.  

%\begin{figure}
%%\begin{center}
%\centering
%\begin{minipage}{.5\textwidth}
%\centering
%\includegraphics[scale=0.1]{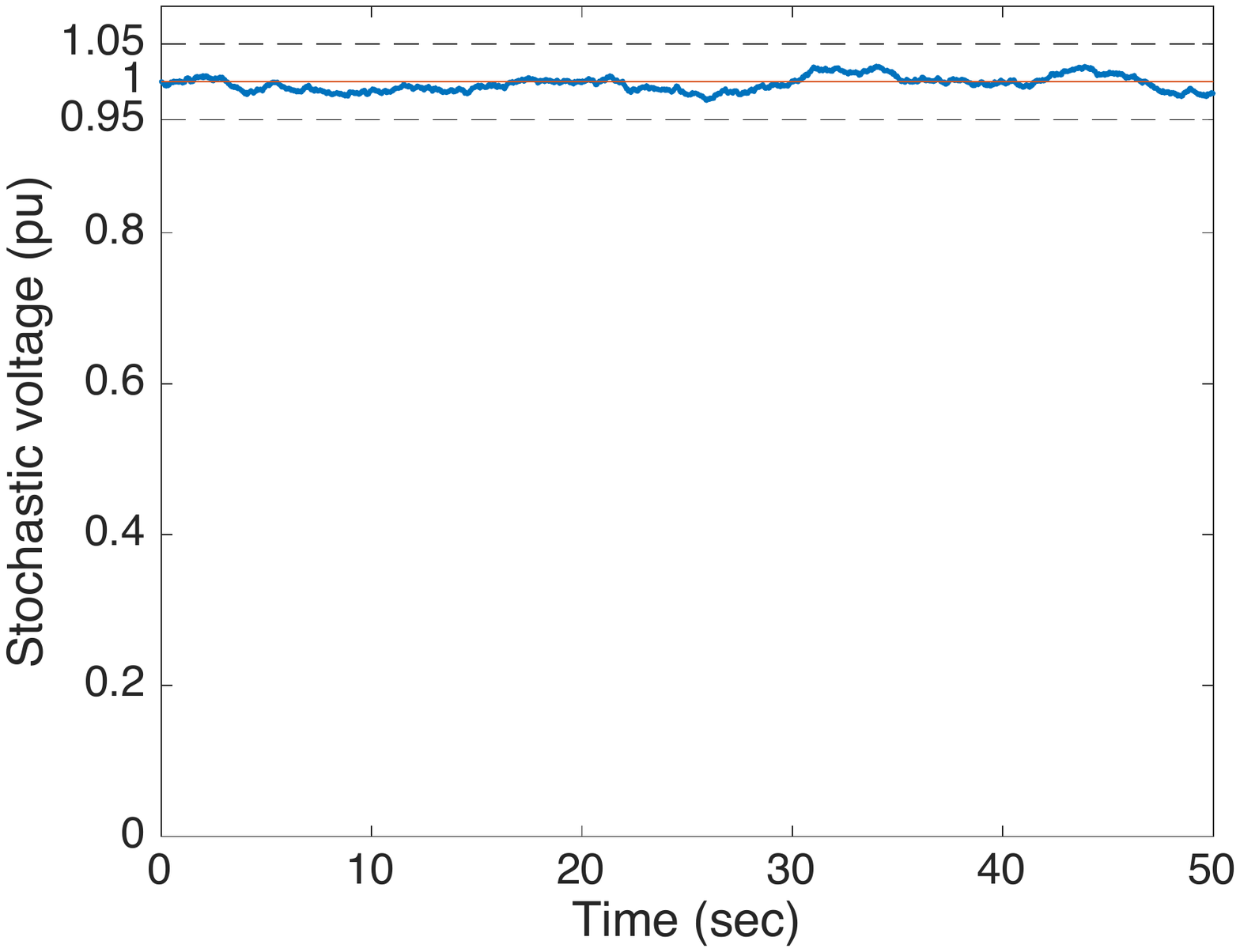}
%%\caption{Voltage fluctuation at generator bus $56$}
%\label{fig_voltage_vs_time}
%\end{minipage}
%\begin{minipage}{.5\textwidth}
%\centering
%\includegraphics[scale=0.2]{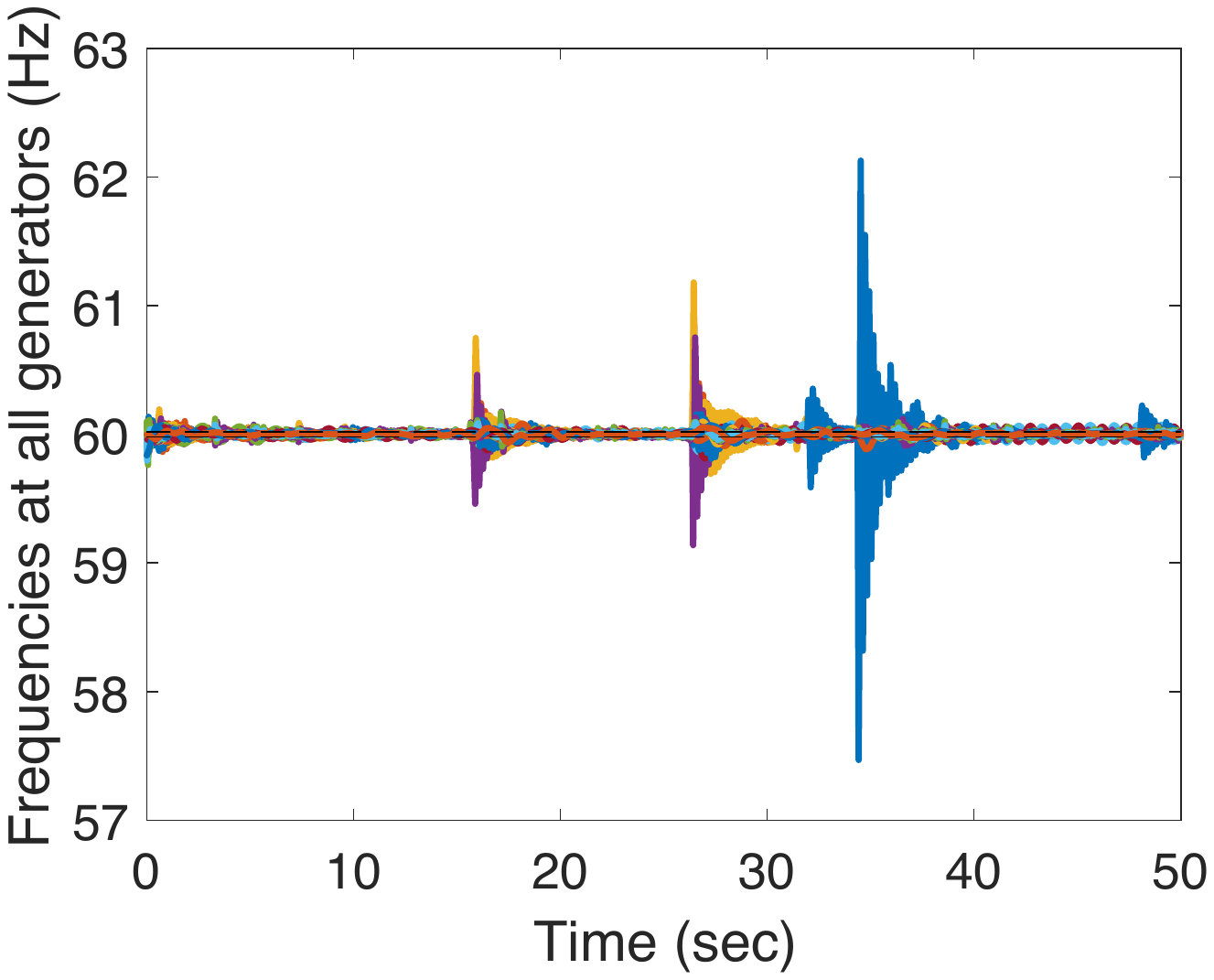}
%%\caption{Mean square unstable behavior for frequencies at all generator buses.}
%\label{fig_frequency_vs_time_all_buses}
%\end{minipage}
%%\end{center}
%\end{figure}

\begin{figure}
\centering
\begin{minipage}{.22\textwidth}
  \centering
  \includegraphics[scale = 0.2]{voltage_vs_time_v3}
  \caption{Voltage fluctuation at generator bus $56$}
  \label{fig_voltage_vs_time}
\end{minipage}%
\begin{minipage}{.28\textwidth}
  \centering
  \includegraphics[scale = 0.28]{frequencies_vs_time_v3}
  \caption{Mean square unstable behavior of frequencies at all generator buses.}
  \label{fig_frequency_vs_time_all_buses}
\end{minipage}
\end{figure}

%
%Further, in Fig. \ref{fig_frequency_vs_time_all_buses}, we observe that, the frequencies at all generator buses go out of operating limits with renewables in the power network. 
%
%\begin{figure}
%\begin{center}
%\includegraphics[scale=0.5]{frequencies_vs_time_v3}
%\caption{Mean square unstable behavior for frequencies at all generator buses.}
%\label{fig_frequency_vs_time_all_buses}
%\end{center}
%\end{figure}

% Now, again we show the fragility of the decentralized frequency controller in the power network with renewables. 
Consider a step change in the power network. For this step change in power, the decentralized frequency controller is ineffective in controlling the controllable loads to regulate the frequency. In Fig. \ref{fig_frequency_with_disturbance}, initially the system was in stable operating condition with frequencies within the operating limits. We created a step change in power after $10$ seconds, and then the frequencies oscillate and go out of the operating range and continue to oscillate. This phenomenon is not desirable, as it has the impact to damage the power system equipment. 
%
%\begin{figure}
%\begin{center}
%\includegraphics[scale=0.5]{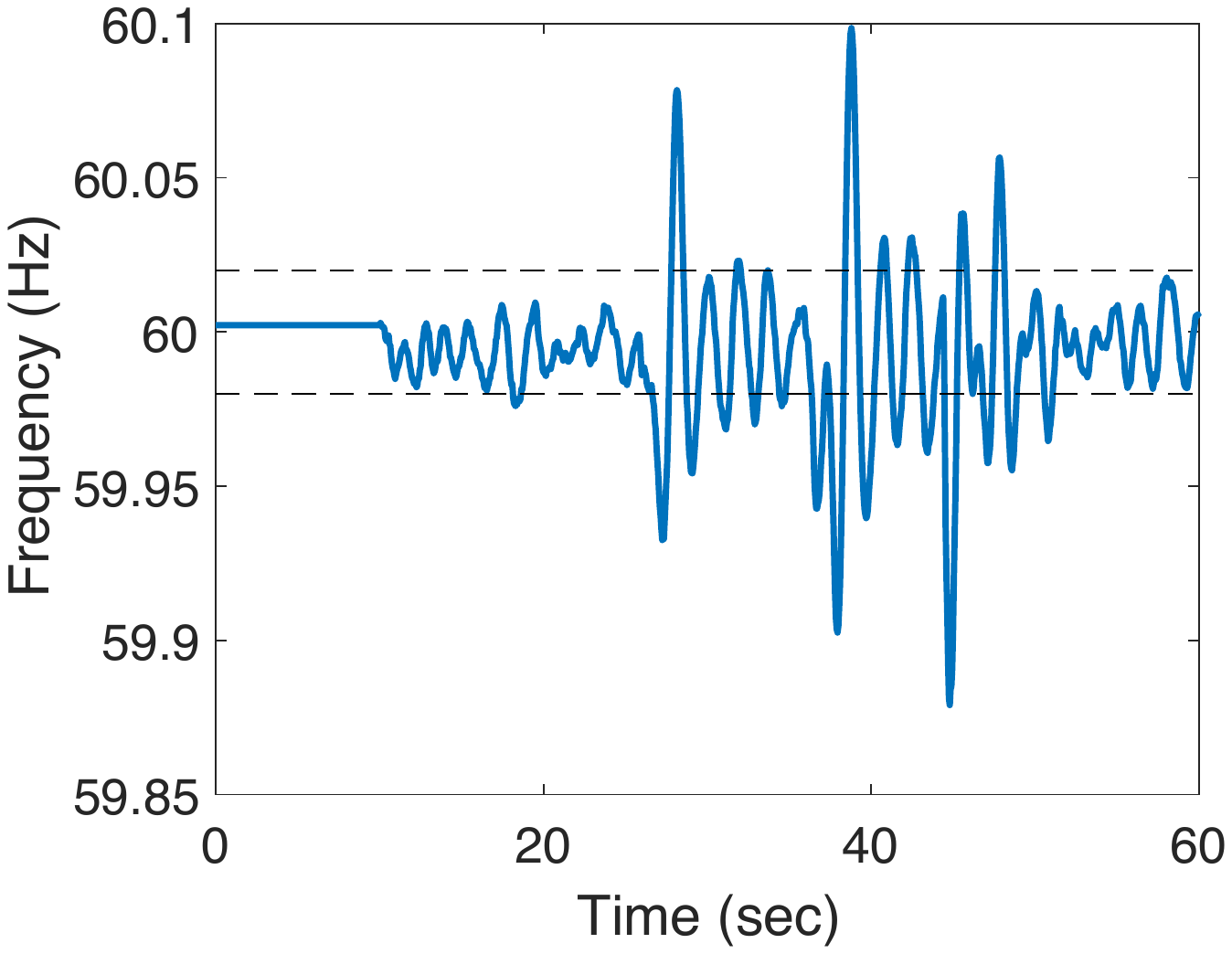}
%\caption{Frequency at  generator bus $53$ following a  step change in power.} 
%\label{fig_frequency_with_disturbance}
%\end{center}
%\end{figure}

Hence, to counteract the fragility of this decentralized controller, a modified robust distributive controller must be designed to regulate the frequency that can tolerate uncertainty in the renewables. 
%
%\begin{figure}
%\begin{center}
%\includegraphics[scale=0.5]{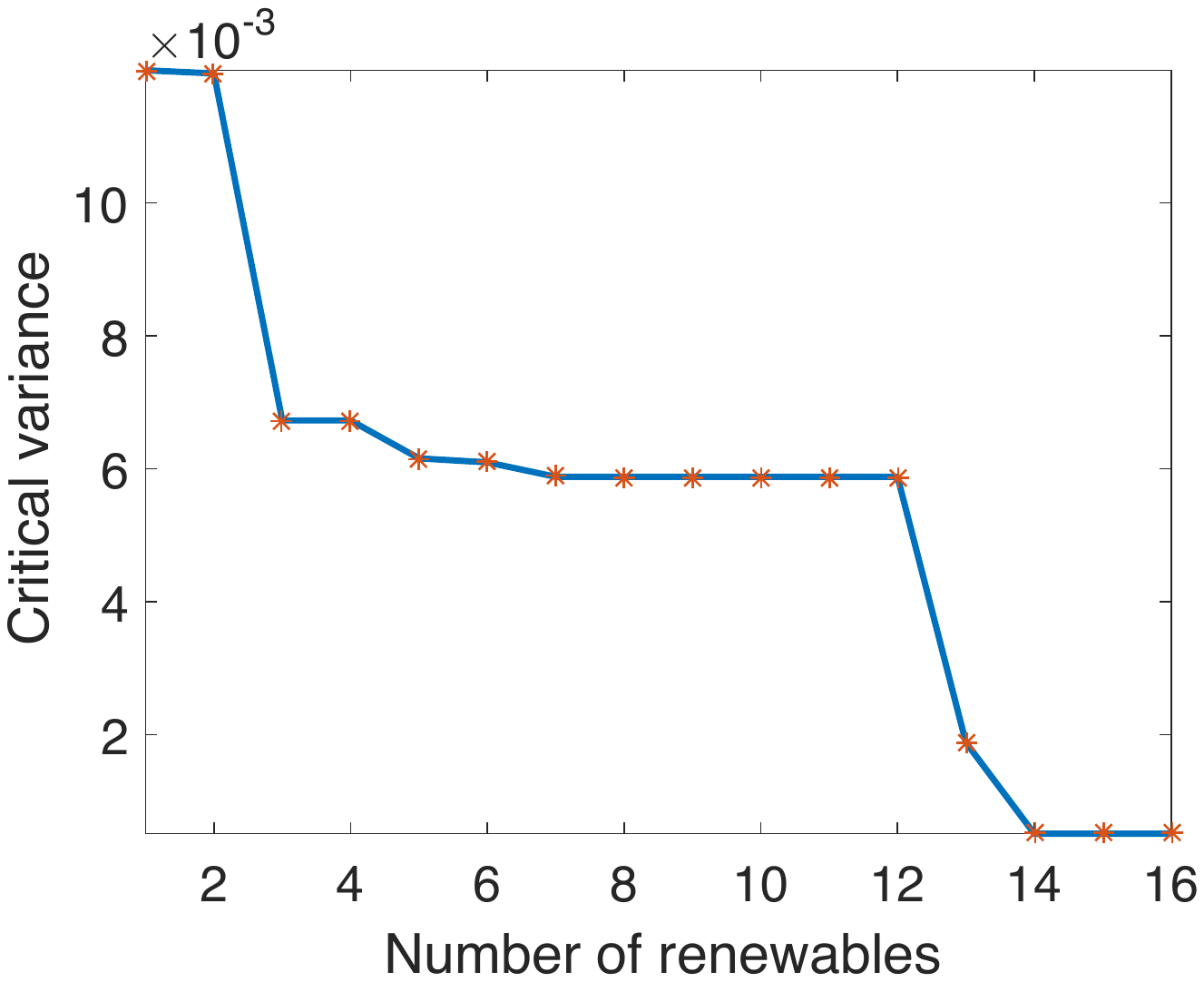}
%\caption{Critical variance with increase in number of renewables} 
%\label{fig_variance_vs_num_renewables}
%\end{center}
%\end{figure}
%

\begin{figure}
\centering
\begin{minipage}{.22\textwidth}
  \centering
  \includegraphics[scale = 0.24]{frequency_variation_with_disturbance_v3}
  \caption{Frequency at  generator bus $53$ following a  step change in power.} 
  \label{fig_frequency_with_disturbance}
\end{minipage}%
\begin{minipage}{.28\textwidth}
  \centering
  \includegraphics[scale = 0.25]{variance_vs_num_renewables}
  \caption{Critical variance with increase in number of renewables} 
  \label{fig_variance_vs_num_renewables}
\end{minipage}
\end{figure}

The higher penetration of renewables in the power network will make the decentralized frequency control algorithm more fragile. In particular, with the increase in the number of renewable energy resources, more bus voltages will become uncertain, and this has an adverse impact on the frequency regulation. Fig. \ref{fig_variance_vs_num_renewables} shows the effect of increasing the penetration of renewables in the power network. We notice, with the increase in penetration (i.e., with an increase in the value of $s$) the critical variance that can be tolerated by the system decreases. Note that, this figure will change based upon which locations in the network are chosen for renewables. However, the trend of decrease in the value of critical variance with the increase in the number of renewables will continue to hold true. 

%affects the frequency of the power network and in turn reduces the critical variance that can be tolerated by the network with decentralized frequency controller. Fig. \ref{fig_variance_vs_num_renewables} shows the impact of increasing the renewables in the power network. 

%The decaying of critical variance with increase in number of renewables as shown in Fig. \ref{fig_variance_vs_num_renewables} changes based on which group of  classical generators in the network are replaced by renewables. However, the critical variance that can be tolerated by the power network always decays with increase in renewables. 

\section{Conclusion}
\label{sec_conclusion}
We showed that the decentralized load-side frequency regulation algorithm is fragile to stochastic parametric uncertainty in a power network. The presence of stochastic uncertainty is motivated through uncertainty in renewables. We show that the decentralized algorithm becomes more fragile with the increase in the cost of the controllable loads and also with the increased degree of penetration of renewable resources. System theoretic-based analysis and synthesis framework developed for the stochastic networked system in \cite{Sai_arxiv,sai_acc2016} is used to prove the main results. Our future research efforts will be focused on the design of distributed frequency regulation algorithm robust to stochastic uncertainty in power network using the synthesis framework developed in \cite{Sai_arxiv}.

% Furthermore optimal robust to parametric uncertainty in power network is also proposed.

%In this paper, we have shown the fragility of decentralized load-side frequency controller in a power network with renewables. The load-side frequency control is achieved by switching the controllable loads in the power network based on a decentralized controller. Renewables such as solar and wind are modeled into the power network and voltages are modeled as stochastic uncertainty inorder to account for variations in availability of renewables. System theoretic concepts are applied to identify the critical variance of the stochastic uncertainty, the system can tolerate while maintaining the mean square stability. We show through simulations, the fragility of the decentralized controller in regulating the frequency of the power network in the presence of renewables. In particular, we show, although stochastic voltages stay in the operating limits, the frequencies go out of the operating range and leads to mean square instability of the power network. Thus, we show the ineffectiveness of decentralized controller to regulate the frequency by controlling the controllable loads. In near future, we would like to design an optimal distributive controller to achieve frequency regulation in the presence of renewables. 

 \bibliographystyle{ieeetran}
 \bibliography{references}
 
%\end{document} 

 \section*{appendix}
Here, first we recall from \cite{Sai_arxiv}, the covariance propagation equation for the systems given in \eqref{eq_nonsynchro_noadd} and \eqref{eq_nonsynchro}. Let $Q(t)$ and $\bar Q(t)$ be the covariance matrices corresponding to \eqref{eq_nonsynchro_noadd} and \eqref{eq_nonsynchro}. Then, they satisfy the following matrix differential equations (MDE's). 
\begin{small}
\begin{align}
\dot{{Q}}(t)=& {Q}(t){\cal A}^\top + {\cal A} {Q}(t) + \sum_{k=1}^p \sigma_k^2 B_k {Q}(t)B_k^\top, \label{cov_mde} \\
\dot {\bar Q}(t)=& \bar Q(t) {\cal A}^\top+{\cal A} \bar Q(t)+\sum_{k=1}^p \sigma_k^2 B_k \bar Q(t) B_k^\top + \sum_{k=1}^p G_kG_k^\top\nonumber \\  
& + \sum_{k=1}^p \sigma_k G_k \mu(t)^{\top} B_k^{\top} + \sum_{k=1}^p \sigma_k B_k \mu(t) G_k^{\top}. \label{cov_mde_add} 
\end{align}
\end{small}
The following equation shows the mean propagation equation for the system with additive noise given in \eqref{eq_nonsynchro}. 
\begin{align}
\dot \mu(t) = & {\cal A} \mu(t)
\label{eq_mean_propagation}
\end{align}
Next, we present the proof of Lemma \ref{lemma_equivalence}. 

\begin{proof}[Lemma \ref{lemma_equivalence}]
Using the operator $\phi$, that transforms a matrix into a vector as defined in \cite{Costa_book}[Chapter 2], the MDE's given in Eq. \eqref{cov_mde} and Eq. \eqref{cov_mde_add} are written as linear differential equations as  given below. 
\begin{eqnarray}
\dot{q} = & \mathscr{A} q,
\label{cov_lde} \\
\dot{\bar q} = & \mathscr{A} \bar q + \mathscr{B},
\label{cov_lde_add}
\end{eqnarray}
where $q = \phi( Q), \bar q = \phi(\bar Q)$, $\mathscr{B} = \sum_{k=1}^p ((G_k \otimes G_k) + ((\sigma_k G_k \mu^{\top}) \otimes B_k) + (B_k \otimes (\sigma_k \mu^{\top} G_k)))\phi(I) \in \mathbb{R}^{n^2} \text{and}$ 
 $\mathscr{A} = {\cal A} \oplus {\cal A} + \sum_{k=1}^p \sigma_k^2 (B_k \otimes B_k)\in\mathbb{R}^{n^2\times n^2},$
where $I$ is the identity matrix of size $n \times n$ and $\otimes$ denotes the Kronecker product, $\oplus$ is the Kronecker sum.

\textit{Necessity}:
The mean square exponential stability of system \eqref{eq_nonsynchro_noadd} yields stability of system \eqref{cov_lde}, that is, $\mathscr{A}$ is Hurwitz. Since  $\mathscr{A}$ is Hurwitz, the steady state value of $\bar q$ is given by 
$\lim_{t \to \infty} \bar q(t) =    \lim_{t \to \infty}  \phi(\bar Q(t)) =  -\mathscr{A}^{-1} (\sum_{k=1}^p G_k \otimes G_k) \phi(I).$
Now, taking the inverse $ \phi$ operator, we obtain,
$ \lim_{t \to \infty} E[x(t)x(t)^{\top}] =  - \phi^{-1}(\mathscr{A}^{-1} (\sum_{k=1}^p G_k \otimes G_k) \phi(I)),$
which is finite. Further, the necessary condition for $\mathscr{A}$ to be Hurwitz is ${\cal A}$ being Hurwitz. This implies that the mean propagation system of \eqref{eq_nonsynchro}, shown in Eq. \eqref{eq_mean_propagation} has a stable evolution. Therefore, system \eqref{eq_nonsynchro} is second moment bounded.

\textit{Sufficiency:} If system \eqref{eq_nonsynchro} is second moment stable, then
$ \lim_{t \to \infty} \bar Q(t) $ is a finite value and the mean system \eqref{eq_mean_propagation} has a stable evolution. Taking the operator, it can be alternately written as,
$\lim_{t \to \infty}  \phi(\bar Q(t))  =\lim_{t\to \infty} e^{\mathscr{A}t}  \phi(\bar Q(0)) - \mathscr{A}^{-1} (1-e^{\mathscr{A}t}) (\sum_{k=1}^p G_k \otimes G_k) \phi(I) + e^{\mathscr{A}t} (\sum_{k=1}^p (\sigma_k G_k \mu^{\top}) \otimes B_k) \phi(I) + e^{\mathscr{A}t} (\sum_{k=1}^p B_k \otimes (\sigma_k \mu^{\top} G_k)) \phi(I).$
The limit on the right-hand side is finite, if and only if $\mathscr A$ is Hurwitz. If $\mathscr{A}$ is Hurwitz, then the system (\ref{eq_nonsynchro_noadd}) is mean square exponentially stable.
\end{proof}
\end{document}